\pgfplotsset{compat=1.15}
\newtheorem{theorem}{Theorem}[section]
\newtheorem{lemma}[theorem]{Lemma}
\title{A Note on the Asymptotic Value  of the Isoperimetric Number of $ J(n,2) $\footnote{Supported by CONACYT FORDECYT-PRONACES/39570/2020}}
\author{Ruy Fabila-Monroy\thanks{Departamento de Matem\'aticas, CINVESTAV.} \thanks{{\tt ruyfabila@math.cinvestav.edu.mx}}    \and Daniel Gregorio-Longino\footnotemark[2] \thanks{{\tt dgregorio@math.cinvestav.mx}}}
\begin{document}
\maketitle

\begin{abstract}
Let $G$ be a graph on $n$ vertices and $S$ a subset of vertices of $G$; the boundary of $S$ is the set,  $\partial S$, of edges of $G$ connecting $ S $ to its complement in $G$.
The isoperimetric number of $G$, is the minimum of 
$\left| \partial S \right|/\left| S \right|$ overall $S \subset V(G)$ of at most $n/2$ vertices. Let $k \le n$ be positive
integers. The Johnson graph is the graph, $J(n,k)$, whose vertices are all the subsets of size $k$ of $\{1,\dots,n\}$, two
of which are adjacent if their intersection has cardinality equal to $k-1$. 
In this paper we show that the asymptotic value of the isoperimetric number of the Johnson graph $J(n,2)$ is equal
to $ (2-\sqrt{2})n$.
\end{abstract}

\section{Introduction}

Throughout this paper let $G$ be a graph on $n$ vertices. Let $S \subset V(G)$;
 the \textit{boundary} of $S$ is the set,  $\partial S$, of edges of $G$ connecting $ S $ to its complement $ S^{c} $. Note that $ S $ and $ S^{c} $ have the same boundary.
 The \textit{isoperimetric number} of $ G $, is the number 
 \begin{equation*}
\operatorname{iso}(G) := \min \left \{\frac{ \left| \partial S \right|}{\left| S \right|}: S \subset V(G) \textrm{ and } 0 < |S| \le  \frac{n}{2} \right \}.
\end{equation*} 
A subset of vertices, on which this minimum is attained, is called an \textit{isoperimetric set} of $ G. $\

 The isoperimetric number of $G$ serves as a measure of the connectivity of $G$; 
 it quantifies the minimal interaction between a subset of vertices and its complement, in terms of the number of edges between them. A quantity related to $\operatorname{iso}(G)$ is the bisection width. The \textit{bisection width} of a graph $ G $ is the minimum number, $\operatorname{bw}(G)$,  of edges which must be removed from $G$ in order to split its vertices into equal-sized sets(with a difference of one, when the number of vertices of $ G $ is odd); see \cite{goldberg1984minimal}. 
 Note that \[\operatorname{bw}(G) \ge \operatorname{iso}(G) \left \lfloor \frac{n}{2} \right \rfloor.\]

 The quantity $\operatorname{iso}(G)$ is also a discrete analogue of the Cheeger isoperimetric constant.
 Given a Riemannian $d$-manifold $M$, the  Cheeger isoperimetric constant is defined as
 \[\inf_A \frac{\operatorname{Vol}_{d-1}\partial (A)}{\operatorname{Vol}_d (A)}, \]
 where $A$ is taken over all non-empty open bounded subsets of $M$. 
 This analogy is described in more detail in \cite{buser1978cubic,buser1984bipartition}.

Let $ n $ and $ k $ be positive integers with $ k\leq n $. Let $[n]:=\{1,2,\dots,n\}$, and $\binom{[n]}{k}$ be the set
of all subsets of $[n]$ of $k$ elements.  The \textit{Johnson graph} $ J(n,k) $ is the graph whose vertex set is $\binom{[n]}{k}$;
where two vertices of $J(n,k)$ are adjacent if their intersection has cardinality equal to $ k-1 $. The Johnson graph has been widely studied due to its connections with coding theory; see for example \cite{etzion1996chromatic, graham1980lower,johnson1962new, liebler2014neighbour,macwilliams1977theory, neunhoffer2014sporadic}.\

Let $ v = \{v_{1},\ldots, v_{k}\} $ be a vertex of $ J(n,k) $. For convenience we always
assume that $ v_{i}<v_{i+1} $, for $ i=1,\ldots , k-1$. Let $w=\{w_1,\ldots,w_k\}$ be another vertex of $J(n,k)$.
In the \textit{lexicographic order}, $v$ is greater than $w$ if $ v_{1}>w_{1} $ or there exists $ 2\leq i \leq k $ such that $ v_{j}=w_{ j} $ for $ 1\leq j< i $ and $ v_{i}>w_{i} $. For example, in $ J(4,2) $ the vertex $ \{1,3\} $ is greater in the lexicographic order than the vertex $ \{1,2\} $. In what follows we assume that the vertices of $J(n,k)$ are ordered by the lexicographic order. 
Let $ m $ be a positive integer; let $ F_{m} $ be the set of the first $ m $ vertices of $J(n,k)$, and 
let $ L_{m} $ be the be set of the last $ m $ vertices of $J(n,k)$.
A classic isoperimetric problem is to  determine the number   
\[  B_{G}(m) = \min \{\left| \partial S \right|: S\subseteq V(G) \textrm{ and } \left| S \right|= m\}.\]  Ahlswede and Katona solved this problem for the Johnson graph $J(n,2)$. \begin{theorem}\label{kat}(Ahlswede and Katona \cite{ahlswede1978graphs})
For every $ 0<m\leq n(n-1)/2 $, the value of $ B_{J(n,2)}(m) $ is attained by the set $ F_{m} $ or by $ L_{m} $.
\end{theorem}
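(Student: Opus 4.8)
The plan is to recast the statement as a purely extremal–graph–theoretic one and then prove it by induction on $n$.

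\textbf{Reformulation.} Observe that $J(n,2)$ is the line graph of $K_n$: its vertices are the edges of $K_n$, and two are adjacent exactly when they share an endpoint. In particular $J(n,2)$ is $2(n-2)$-regular, so for a set $S$ of $m$ vertices of $J(n,2)$ — equivalently, a subgraph $H\subseteq K_n$ with $m$ edges — one has $|\partial S| = 2(n-2)m - 2\operatorname{ch}(H)$, where $\operatorname{ch}(H):=\sum_{v}\binom{\deg_H(v)}{2}=\tfrac12\sum_v\deg_H(v)^2-m$ counts the pairs of edges of $H$ with a common endpoint, i.e. the edges of $J(n,2)$ inside $S$. Thus minimizing $|\partial S|$ over $|S|=m$ is the same as maximizing $\operatorname{ch}(H)$ over $m$-edge subgraphs of $K_n$. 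Writing $F_m^{(n)}$, $L_m^{(n)}$ for the subgraphs of $K_n$ whose edge sets are the first, respectively last, $m$ edges of $K_n$ in lexicographic order (these are $F_m$, $L_m$ of the theorem), the goal is to show that one of $F_m^{(n)}$, $L_m^{(n)}$ maximizes $\operatorname{ch}$.

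\textbf{Reduction to threshold graphs.} Fix an $m$-edge $H$ maximizing $\operatorname{ch}$. If two vertices $u,v$ had incomparable neighbourhoods, say $\deg_H(u)\ge\deg_H(v)$ with some $b\in N_H(v)\setminus(N_H(u)\cup\{u\})$, then deleting $vb$ and inserting $ub$ keeps the number of edges fixed while changing $\operatorname{ch}(H)$ by $\deg_H(u)-\deg_H(v)+1\ge 1$, contradicting maximality. Hence the neighbourhoods of $H$ are pairwise nested, i.e. $H$ is a threshold graph (pad it with isolated vertices to have exactly $n$). The only property of threshold graphs I need is that such a graph on $\ge 2$ vertices has either an isolated vertex or a vertex adjacent to all others, as one sees from the last step of a construction sequence.

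\textbf{The induction.} Argue by induction on $n$, handling small $n$ directly. Let $H$ be as above. If $H$ has an isolated vertex $v$, then $H-v$ is an $m$-edge graph on $n-1$ vertices (so $m\le\binom{n-1}{2}$) and $\operatorname{ch}(H)=\operatorname{ch}(H-v)$, so the induction hypothesis gives $\operatorname{ch}(H)\le\max\bigl(\operatorname{ch}(F_m^{(n-1)}),\operatorname{ch}(L_m^{(n-1)})\bigr)$. Here $\operatorname{ch}(L_m^{(n-1)})=\operatorname{ch}(L_m^{(n)})$ (the quasi-clique with $m$ edges is the same graph whatever the ambient size, once $m\le\binom{n-1}{2}$), and the key inequality $\operatorname{ch}(F_m^{(n-1)})\le\max\bigl(\operatorname{ch}(F_m^{(n)}),\operatorname{ch}(L_m^{(n)})\bigr)$ (discussed below) then yields $\operatorname{ch}(H)\le\max\bigl(\operatorname{ch}(F_m^{(n)}),\operatorname{ch}(L_m^{(n)})\bigr)$. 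If instead $H$ has no isolated vertex it has a dominating one, so its complement $\bar H$ in $K_n$ has an isolated vertex; applying the case just treated to $\bar H$, which has $\bar m:=\binom n2-m$ edges, gives $\operatorname{ch}(\bar H)\le\max\bigl(\operatorname{ch}(F_{\bar m}^{(n)}),\operatorname{ch}(L_{\bar m}^{(n)})\bigr)$. Since $\operatorname{ch}$ is quadratic in the degree sequence, $\operatorname{ch}(\bar X)-\operatorname{ch}(X)$ depends only on $n$ and the number of edges of $X$; and complementation in $K_n$ interchanges ``first $m$ edges'' with ``last $\bar m$ edges'', so $\overline{F_m^{(n)}}=L_{\bar m}^{(n)}$ and $\overline{L_m^{(n)}}=F_{\bar m}^{(n)}$. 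Substituting these back turns the previous bound into $\operatorname{ch}(H)\le\max\bigl(\operatorname{ch}(F_m^{(n)}),\operatorname{ch}(L_m^{(n)})\bigr)$ in this case too; the reverse inequality is obvious, so the induction closes.

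\textbf{The main obstacle.} Everything hinges on $\operatorname{ch}(F_m^{(n-1)})\le\max\bigl(\operatorname{ch}(F_m^{(n)}),\operatorname{ch}(L_m^{(n)})\bigr)$ (and, through the previous paragraph, its analogue with $m$ replaced by $\bar m$). When $m\le n-2$ both quasi-stars equal $K_{1,m}$ and there is nothing to prove; when $m$ is close to $\binom{n-1}{2}$ the graph $F_m^{(n-1)}$ is nearly complete and one must instead compare it with $L_m^{(n)}$; in the intermediate range one needs $\operatorname{ch}(F_m^{(n-1)})\le\operatorname{ch}(F_m^{(n)})$. I would obtain all three from the recursion $\operatorname{ch}(F_m^{(n)})=\operatorname{ch}(F_{m-n+1}^{(n-1)})+\binom{n-1}{2}+2(m-n+1)$, valid for $m\ge n-1$ because $F_m^{(n)}$ contains a dominating vertex and adjoining a dominating vertex to an $e$-edge graph on $N$ vertices raises $\operatorname{ch}$ by $\binom N2+2e$, together with the entirely parallel recursion for $\operatorname{ch}(L_m^{(n)})$ valid when $m>\binom{n-1}{2}$. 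Comparing these two recursions term by term, and pinning down the finitely many base cases, is the only genuinely computational part; I expect this bookkeeping, rather than any conceptual point, to be where the work lies.
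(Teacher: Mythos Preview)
The paper does not prove this theorem; it is quoted as a result of Ahlswede and Katona and used as a black box for the rest of the argument. So there is no proof in the paper to compare yours against.

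On its own merits: your reformulation is correct and standard. Since $J(n,2)$ is the line graph of $K_n$, minimizing $|\partial S|$ over $|S|=m$ is the same as maximizing $\sum_v\binom{d_v}{2}$ over $m$-edge subgraphs of $K_n$, and the shifting argument reducing to threshold graphs is the usual first move. The inductive skeleton (peel off an isolated or a dominating vertex; use complementation to interchange the two cases) is sound, and your claim that $\operatorname{ch}(\bar X)-\operatorname{ch}(X)$ depends only on $n$ and $|E(X)|$ is correct, since $\binom{n-1-d}{2}-\binom{d}{2}$ is affine in $d$.

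The gap is exactly where you flag it, but it is more serious than you suggest. The inequality
\[
\operatorname{ch}\bigl(F_m^{(n-1)}\bigr)\le\max\bigl(\operatorname{ch}(F_m^{(n)}),\operatorname{ch}(L_m^{(n)})\bigr)
\]
is not bookkeeping: it is a special case of the very theorem you are proving, and it already contains the difficulty. In particular, the one-sided inequality $\operatorname{ch}(F_m^{(n-1)})\le\operatorname{ch}(F_m^{(n)})$ is \emph{false} in part of the range (e.g.\ $n=5$, $m=6$ gives $12>11$), so you genuinely need to detect, as a function of $m$, where the quasi-clique overtakes the quasi-star. Locating that crossover is precisely the content of the Ahlswede--Katona theorem; their original proof, and later streamlined versions, all spend several pages on exactly this comparison. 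Your recursions are correct and are the right tool, but turning them into the needed inequality across the full interval $n-1\le m\le\binom{n-1}{2}$ is the substance of the argument, not a routine tail. As written, the proposal is a correct outline with the hard step still open.
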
 In this note we use Theorem \ref{kat} to give the asymptotic value of $ \operatorname{iso} (J(n,2)). $  
\begin{theorem}\label{main}
\begin{equation*}
  \operatorname{iso}( J(n,2))\sim \left (2-\sqrt{2} \right )n.
\end{equation*}
\end{theorem}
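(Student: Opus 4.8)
The plan is to use Theorem~\ref{kat} to reduce the problem to estimating $|\partial F_m|$ and $|\partial L_m|$, and then to compute these for the two explicit families. Write $N:=\binom{n}{2}$ for the number of vertices of $J(n,2)$ and note that $J(n,2)$ is regular of degree $d:=2(n-2)$, so that $|\partial S|=d|S|-2e(S)$, where $e(S)$ denotes the number of edges of $J(n,2)$ joining two vertices of $S$. Consequently
\[
\operatorname{iso}(J(n,2))=\min_{0<m\le N/2}\frac{\min\bigl(|\partial F_m|,|\partial L_m|\bigr)}{m}=d-2\max_{0<m\le N/2}\frac{\max\bigl(e(F_m),e(L_m)\bigr)}{m},
\]
where the first equality is Theorem~\ref{kat}. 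So the theorem follows once one shows that $\max\bigl(e(F_m),e(L_m)\bigr)\le\bigl(\tfrac{1}{\sqrt 2}+o(1)\bigr)nm$ uniformly over $0<m\le N/2$, and that equality is asymptotically attained.

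For the upper bound on $\operatorname{iso}(J(n,2))$ I would take $S=L_{\binom{t}{2}}$ with $t=\lfloor n/\sqrt 2\rfloor$. Since $L_{\binom{t}{2}}$ is exactly the set of all $2$-subsets of $\{n-t+1,\dots,n\}$, since $\binom{t}{2}\le N/2$ for $n$ large, and since each vertex of $S$ has precisely $2(t-2)$ neighbours in $S$, we get $e(S)=\binom{t}{2}(t-2)$ and $|\partial S|/|S|=d-2(t-2)=2(n-t)\le(2-\sqrt 2)n+2$. Hence $\operatorname{iso}(J(n,2))\le(2-\sqrt 2)n+O(1)$.

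For the matching lower bound I would bound $e(F_m)$ and $e(L_m)$ separately. For $L_m$: if $t$ is the largest integer with $\binom{t}{2}\le m$, then $L_m$ is the clique on $\{n-t+1,\dots,n\}$ together with at most $t$ extra vertices, each of which has at most $t-1$ neighbours in that clique and which are pairwise adjacent; this gives $e(L_m)\le t\binom{t}{2}+t^2$, hence $e(L_m)/m\le t+O(1)\le\tfrac{n}{\sqrt 2}+O(1)$, using $\binom{t}{2}\le m\le N/2$. For $F_m$: here $F_m$ decomposes as $\operatorname{St}_s\cup R'$, where $\operatorname{St}_s$ is the set of all $2$-subsets meeting $\{1,\dots,s\}$ and $R'$ consists of $q$ pairs of the form $\{s+1,x\}$, with $m=\binom{n}{2}-\binom{n-s}{2}+q$ and $0\le q\le n-1-s$. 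Counting the edges inside $\operatorname{St}_s$, between $\operatorname{St}_s$ and $R'$, and inside $R'$ gives, after simplification, $e(F_m)=(n-2)m-\bigl[s\binom{n-s}{2}+q(n-2-2s)-\binom{q}{2}\bigr]$. The bracketed quantity, and more generally $\bigl[\,\cdots\,\bigr]-cnm$ for any constant $c$, is a concave function of $q$ (its quadratic term is $-q^2/2$), so it is enough to establish the inequality $s\binom{n-s}{2}+q(n-2-2s)-\binom{q}{2}\ge\bigl(1-\tfrac{1}{\sqrt 2}-o(1)\bigr)nm$ at the two endpoints $q=0$ and $q=n-1-s$. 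The endpoint $q=n-1-s$ is just the case $\operatorname{St}_{s+1}$, and for $q=0$ the inequality becomes, with $\beta:=s/n$, essentially $\tfrac{(1-\beta)^2}{2-\beta}\ge 1-\tfrac{1}{\sqrt 2}-o(1)$; this holds because $m\le N/2$ forces $\beta\le 1-\tfrac{1}{\sqrt 2}+o(1)$ and $\beta\mapsto\tfrac{(1-\beta)^2}{2-\beta}$ is decreasing on $[0,1)$ with value exactly $1-\tfrac{1}{\sqrt 2}$ at $\beta=1-\tfrac{1}{\sqrt 2}$. It follows that $e(F_m)/m\le(n-2)-\bigl(1-\tfrac{1}{\sqrt 2}\bigr)n+o(n)=\tfrac{n}{\sqrt 2}+o(n)$. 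Combining the two cases yields $\operatorname{iso}(J(n,2))\ge(2-\sqrt 2)n-o(n)$, which together with the upper bound proves the claim.

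The step I expect to be the main obstacle is the treatment of $F_m$: one must get the formula for $e(F_m)$ exactly right in terms of the parameters $(s,q)$, recognise that concavity in $q$ reduces everything to the pure quasi-star $\operatorname{St}_s$, and then carry the $o(1)$ terms uniformly in $m$ and $n$ — in particular verifying that the constraint $|S|\le N/2$ pins the critical value of $\beta$ down to exactly the threshold $1-1/\sqrt 2$ at which the bound for $\operatorname{St}_s$ is tight. The $L_m$ estimate and the upper bound are, by comparison, routine.
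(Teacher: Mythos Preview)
Your argument is correct and takes a genuinely different route from the paper's. Both proofs start from Theorem~\ref{kat}, but the paper proceeds by establishing two monotonicity lemmas (Lemmas~\ref{horizontal} and~\ref{vertical}) showing that $|\partial S|/|S|$ can only decrease when one enlarges $F_m$ or $L_m$ in the appropriate way; this reduces the lower bound to evaluating the ratio at the two specific sets $L_{n^\ast}$ and $F'$ of size roughly $N/2$, after which an explicit asymptotic computation finishes the job. You instead bound $e(F_m)/m$ and $e(L_m)/m$ directly for every $m\le N/2$: the $L_m$ case via the simple identity $e(L_m)/m=(t-2)+\binom{r+1}{2}/m\le t-1$, and the $F_m$ case via the closed form $e(F_m)=(n-2)m-\bigl[s\binom{n-s}{2}+q(n-2-2s)-\binom{q}{2}\bigr]$ together with the concavity-in-$q$ reduction to quasi-stars $\operatorname{St}_s$. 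Your approach avoids the somewhat laborious polynomial verifications in Lemmas~\ref{horizontal} and~\ref{vertical} at the cost of deriving and checking the explicit edge-count formula; the paper's approach is more structural and pinpoints the extremal configurations, while yours is shorter and makes transparent why the critical threshold is $\beta=1-1/\sqrt2$. One small point worth making explicit in your write-up: when $s$ is maximal with $|\operatorname{St}_s|\le N/2$, the right endpoint $q=n-1-s$ corresponds to $\operatorname{St}_{s+1}$ with $|\operatorname{St}_{s+1}|>N/2$, but since $(s+1)/n=1-1/\sqrt2+O(1/n)$ the inequality $(1-\beta)^2/(2-\beta)\ge 1-1/\sqrt2-o(1)$ still holds there, so the concavity step goes through uniformly.
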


\section{Preliminaries}

In what follows let 
\[n^\ast:= \lfloor n(n-1)/4\rfloor. \]
By  Theorem \ref{kat}, if $ S $ is an isoperimetric set of $J (n,2) $, then $ S $ is equal to one of 
$ F_{1},\ldots , F_{n^\ast},L_{1},\ldots, L_{n^\ast} $. For the proof of Theorem~\ref{main}, we minimize the expression $ |\partial S| / |S| $ over these sets.

Before proceeding, we give an expression for $ |\partial S| $. This result is stated in \cite{bey2006remarks}. We provide a short proof for completeness.

\begin{lemma}\label{iso_s}
Let $ S $ be a non-empty subset of vertices of $ J(n,k) $. 
For  every $A \in \binom{[n]}{k-1}$,
let $ S_{A}=\{v\in S : A\subset v \}$.
Then \begin{equation*}
\begin{split}
|\partial S|  &= k(n-k+1)|S|-\displaystyle\sum_{A \in \binom{[n]}{k-1}}{|S_{A }|^2}.
\end{split}
\end{equation*}
\end{lemma}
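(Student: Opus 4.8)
The plan is to exploit the fact that the edge set of $J(n,k)$ has a natural decomposition into cliques, one for each $(k-1)$-subset of $[n]$, and then run two short double-counting arguments. First I would record the basic structural fact: two vertices $v,w$ of $J(n,k)$ are adjacent precisely when $v\cap w$ is a $(k-1)$-set, and in that case $v\cap w$ is uniquely determined by the edge $\{v,w\}$. Conversely, for a fixed $A\in\binom{[n]}{k-1}$, the vertices of $J(n,k)$ containing $A$ are exactly the $k$-sets $A\cup\{x\}$ with $x\in[n]\setminus A$; there are $n-k+1$ of them, and they are pairwise adjacent. Hence the edge set of $J(n,k)$ is the disjoint union, over $A\in\binom{[n]}{k-1}$, of the edge sets of these cliques on $n-k+1$ vertices.

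Next I would apply a handshaking argument. Since $J(n,k)$ is regular of degree $k(n-k)$, summing degrees over the vertices of $S$ gives $k(n-k)|S|$, and this sum counts every edge inside $S$ twice and every edge of $\partial S$ once, so
\[
|\partial S| = k(n-k)|S| - 2e(S),
\]
where $e(S)$ denotes the number of edges of $J(n,k)$ with both endpoints in $S$. Using the clique decomposition, an edge with both ends in $S$ lies in exactly one clique indexed by some $A$, and it corresponds to a pair of vertices inside $S_A$; therefore $e(S)=\sum_{A \in \binom{[n]}{k-1}}\binom{|S_A|}{2}$.

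Substituting and expanding $\binom{|S_A|}{2}$ then yields
\[
|\partial S| = k(n-k)|S| - \sum_{A}|S_A|\bigl(|S_A|-1\bigr) = k(n-k)|S| + \sum_{A}|S_A| - \sum_{A}|S_A|^2 .
\]
To finish, I would evaluate $\sum_{A}|S_A|$ by double counting the pairs $(v,A)$ with $v\in S$, $A\in\binom{[n]}{k-1}$ and $A\subset v$: each $v\in S$ has exactly $k$ subsets of size $k-1$, so $\sum_{A}|S_A| = k|S|$. Plugging this in gives $|\partial S| = k(n-k+1)|S| - \sum_{A}|S_A|^2$, as claimed.

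I do not expect a genuine obstacle here: every step is a short counting argument. The only point that warrants a moment's care is verifying that the clique decomposition of the edge set is genuinely a partition — that is, that the $(k-1)$-set realizing a given edge is unique — but this is immediate, since $|v\cap w|=k-1$ forces $v\cap w$ to be exactly that set.
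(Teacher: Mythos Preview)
Your proof is correct. Both your argument and the paper's rest on the same structural observation --- that the $(k-1)$-subsets $A$ index a partition of the edge set of $J(n,k)$ into cliques of size $n-k+1$ --- but they organize the count differently. The paper computes $|\partial S|$ directly: for each $v\in S$ and each $A_{v,i}=v\setminus\{v_i\}$ it counts the $(n-k)-(|S_{A_{v,i}}|-1)$ neighbours of $v$ through that clique lying outside $S$, sums over $i$ and over $v$, and then observes that $\sum_{v\in S}\sum_i |S_{A_{v,i}}|=\sum_A |S_A|^2$ since each $A$ is hit once by every $v\in S_A$. You instead pass through the regular-graph identity $|\partial S|=k(n-k)|S|-2e(S)$, evaluate $e(S)=\sum_A\binom{|S_A|}{2}$ from the clique partition, and recover the missing $k|S|$ via a separate double count of pairs $(v,A)$. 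The two routes are equivalent and equally short; yours makes the clique partition and the role of $e(S)$ explicit, while the paper's vertex-by-vertex count reaches the formula without naming $e(S)$ at all.
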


\begin{proof}

Let $ v=\{v_{1},v_{2},\ldots,v_{k}\} \in S $. For every $1 \le i \le k$, let $ A_{v,i}:=v\setminus\{v_{i}\}$.
Note that $v$ is adjacent to $n-k$ vertices of the form $ A_{v,i}\cup \{w\} $.
Since $v \in S_{A_{v,i}}$, exactly $|S_{A_{v,i}}|-1$ of these vertices
are in $S$. 
Therefore, each $v \in S$ contributes with \begin{equation*}
\begin{split}
\displaystyle\sum_{i=1}^{k}\left [ n-k-(|S_{A_{v,i}}|-1) \right ] &= k(n-k+1)-\displaystyle\sum_{i=1}^k{|S_{A_{v,i}}|}
\end{split}
\end{equation*} edges to the boundary of $S$.
Therefore,
\[|\partial S|= k(n-k+1)|S|-\sum_{v \in S}\sum_{i=1}^k{|S_{A_{v,i}}|}.\]

Let  $A \in \binom{[n]}{k-1}$. Note that $v \in S_A$ if and only
if $A=A_{v,i}$, for some $i \in [k]$. Thus, $A$ appears
exactly $|S_A|$ times as an $S_{A_{v,i}}$ in $\sum_{v \in S}\sum_{i=1}^k{|S_{A_{v,i}}|}.$
This implies that 
\[\sum_{v \in S}\sum_{i=1}^k{|S_{A_{v,i}}|}=\sum_{A \in \binom{[n]}{k-1}} |S_A|^2.\]
The result follows.
\end{proof}


In what follows,  to represent the vertices of $ J(n,2) $  we use triangular diagrams like the one of Figure~\ref{diagrama}.
\begin{figure}[H]
\begin{center}
\begin{tikzpicture}[inner sep=0.5 mm,xscale=0.5,yscale=.5]

\node (c) at (0,0) {$\{1,2\}$};
\node (c) at (2,0) {$\{1,3\}$};
\node (c) at (4,0) {$\ldots$};
\node (c) at (6,0) {$\{1,n\}$};

\node (c) at (2,-1) {$\{2,3\}$};
\node (c) at (4,-1) {$\ldots$};
\node (c) at (6,-1) {$\{2,n\}$};

\node (c) at (6,-2) {$\vdots$};
\node (c) at (6,-3) {$\{n-1,n\}$};

\end{tikzpicture}
\end{center}
\caption{The vertices of $J(n,2)$ arranged in a triangular diagram }\label{diagrama}
\end{figure}
We use the usual notions of direction(left, right, up and down), to indicate the position of one vertex with respect to another in the diagram. For example, vertex $ \{2,3\} $ is below vertex $ \{1,3\} $ and to the left of vertex $ \{2,4\} $. 
Let $v=\{i,j\} \in V(J(n,2))$. We say that $ v $ is in row $i $ and in column $j$. Note that  the neighbors of $v $ can be visualized in a geometric  way, they are: all the different vertices from $ v $ that are in the rows $ i $ and $ j $; and all the different
vertices from $ v $ that are in the columns $ i $ and $ j $. Also note that the vertices of $J(n,2)$ that are greater (in lexicographic order) than $v$ are precisely the vertices in the diagram that are to the right or below $v$.

Let $v=\{i,j\}$ and $w=\{i',j'\}$ be vertices of $J(n,2)$. We say that $v$ \emph{dominates}
$w$, if $i \le i'$ and $j \le j'$.
A subset $ S $ of vertices of $ J(n,2) $ is \textit{stable} (see \cite{harper1977stabilization,harper1991problem}) 
if for every $v \in S$  all the vertices of $ J(n,2) $ dominated by $v$ are also in $S$. 
We say that a stable set $S$ is \emph{determined by} a set $X \subset V(J(n,2))$, if $S$ consists of all the vertices of $J(n,2)$ that are dominated by a vertex in $X$.

\begin{figure}
 	\centering
 	\includegraphics[width=0.4\textwidth]{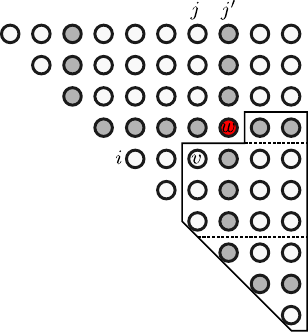}
 	\caption{Depiction of the proof of Lemma~\ref{horizontal} $w$ is painted red; its neighbors are shaded and
 	$S$ is enclosed with a polygon.}
 	\label{fig:hor}
 \end{figure}

We first show that under certain conditions,  it is possible to add a vertex to a stable set to obtain a stable set $ S' $ satisfying that $ |\partial S'|/| S'|\leq |\partial S|/| S| $.
\begin{lemma}\label{horizontal}
Let  $v=\{i,j\}$ and $w=\{i-1,j'\}$ be two vertices of $J(n,2)$, with $j'\ge j$. Let $S$ be the union
of the set of vertices to the right of $w$ and the set of vertices dominated by $v$. 
 Then $ S':=S\cup \{w\} $ is a stable set, and
\[\frac{|\partial S'|}{|S'|}\leq \frac{|\partial S|}{|S|}. \]
\end{lemma}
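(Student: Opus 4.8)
The strategy is to compute $|\partial S|$ and $|\partial S'|$ directly via Lemma~\ref{iso_s} and compare. First I would confirm that $S'$ is stable: every vertex dominated by $w=\{i-1,j'\}$ lies either in row $i-1$ (hence to the right of $w$, so in $S$) or in a row $\ge i$; such a vertex $\{a,b\}$ with $a \ge i$, $b \ge j'\ge j$ is dominated by $v=\{i,j\}$, hence in $S$. So $S' = S \cup \{w\}$ is stable. Next, set $d := |\partial S'| - |\partial S|$ and $s := |S| = |S'| - 1$; the inequality $|\partial S'|/|S'| \le |\partial S|/|S|$ is equivalent to $d \cdot s \le |\partial S|$, i.e. to $|\partial S| \ge d\,(|S'|-1)$. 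So I need a lower bound on $|\partial S|$ and an exact (or upper) value of $d$.

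For $d$: by Lemma~\ref{iso_s} with $k=2$, $|\partial S| = 2(n-1)|S| - \sum_{A}|S_A|^2$, where the sum is over singletons $A=\{\ell\}$ and $S_{\{\ell\}}$ counts vertices of $S$ meeting $\ell$ — i.e. vertices in row $\ell$ or column $\ell$ of the diagram. Adding $w=\{i-1,j'\}$ increases $|S|$ by one and increases exactly $|S_{\{i-1\}}|$ and $|S_{\{j'\}}|$ by one each; all other $|S_A|$ are unchanged. Hence
\[
d = 2(n-1) - \bigl[(2|S_{\{i-1\}}|+1) + (2|S_{\{j'\}}|+1)\bigr] = 2(n-1) - 2 - 2|S_{\{i-1\}}| - 2|S_{\{j'\}}|,
\]
where here $|S_{\{i-1\}}|, |S_{\{j'\}}|$ denote the counts in the original $S$ (before adding $w$). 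This is precisely the number of neighbors of $w$ not in $S$, as expected: $w$ has $2(n-1)-2$ neighbors, and $|S_{\{i-1\}}|+|S_{\{j'\}}|$ of them lie in $S$ (neighbors of $w$ are exactly the other vertices in row $i-1$, row $j'$, column $i-1$, column $j'$).

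For the lower bound on $|\partial S|$: the key point is that $S$ contains the entire rows $1,\dots,i-2$ and the "upper-right staircase" together with the block dominated by $v$. I would bound $|\partial S|$ from below by counting, for instance, only the edges leaving $S$ through the rows $\ge i-1$ region, or more cleanly, argue that since $S$ is a large stable set, each of the at least $j'-1$ vertices to the left of $w$ in row $i-1$ that are \emph{not} in $S$... — actually the cleanest route is: $S$ contains at least the $|S_{\{i-1\}}|$-many vertices meeting $i-1$ plus the block dominated by $v$, and one shows the number of boundary edges is at least $d \cdot |S|/(|S|)$... Let me instead bound $|\partial S| \ge d \cdot |S'|$ directly would be too strong; what I actually need is $|\partial S|/|S| \ge d$, equivalently $|\partial S| \ge d|S|$. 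I expect this to follow because $w$'s contribution $d$ to the boundary (number of missing neighbors) is small — roughly $2(n-1) - 2|S_{\{i-1\}}|$ — while $S$, being stable and reasonably large, has average boundary-per-vertex comparable to $2(n-1) - 2\cdot(\text{average } |S_A|)$, and the vertices already in $S$ in the relevant rows have $|S_A|$ no larger than $|S_{\{i-1\}}|$.

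\textbf{Main obstacle.} The delicate step is the lower bound $|\partial S| \ge d|S|$: one must exploit the specific staircase-plus-rectangle shape of $S$ to show its boundary is not too small relative to $|S|$. I would handle this by splitting the boundary edges of $S$ according to which vertex of $S$ they emanate from, using that vertices in rows $1,\dots,i-2$ are fully surrounded within those rows (contributing boundary only downward/rightward), and that the vertex $v$ and the block it dominates contribute a predictable number of boundary edges; a careful bookkeeping (or a direct application of Lemma~\ref{iso_s} to the explicit form of $S$) should yield the bound. An alternative, possibly slicker, approach is induction: peel off vertices of $S$ one at a time in lexicographic order and track how $|\partial S|/|S|$ evolves, but the direct computation via Lemma~\ref{iso_s} is likely the intended route.
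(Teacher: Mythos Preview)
Your overall strategy matches the paper's: reduce the inequality to $d\,|S| \le |\partial S|$ with $d := |\partial S'| - |\partial S|$, compute $d$ via Lemma~\ref{iso_s} (your formula yields $d = 2(i+j'-n-2)$ once one plugs in $|S_{\{i-1\}}| = n-j'$ and $|S_{\{j'\}}| = n-i$, exactly as in the paper), and then compute $|S|$ and $|\partial S|$ explicitly from Lemma~\ref{iso_s}. The paper finishes by expanding $|\partial S| - d|S|$ and rewriting the resulting polynomial as a sum of manifestly nonnegative terms.

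However, your picture of $S$ in the ``main obstacle'' discussion is wrong, and this would derail the explicit computation you propose. You write that ``$S$ contains the entire rows $1,\dots,i-2$'' and later invoke ``vertices in rows $1,\dots,i-2$ are fully surrounded within those rows''; in fact $S$ contains \emph{nothing} in rows $1,\dots,i-2$. By definition $S$ is only the strip in row $i-1$ strictly to the right of $w$, together with the block dominated by $v$ (rows $\ge i$, columns $\ge j$); concretely $|S| = (n-j') + (j-i)(n-j+1) + \binom{n-j+1}{2}$, and $|S_{\{\ell\}}| = 0$ for all $\ell < i-1$. Your stability argument already uses the correct picture, so this is a visualization slip rather than a conceptual one, but it must be fixed before the bookkeeping for $|\partial S|$ can go through. (One further minor point: $d$ equals the number of neighbors of $w$ outside $S$ \emph{minus} the number inside, not simply the number outside; this matters since $d$ can be negative when $i+j' < n+2$.)
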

\begin{proof}
The vertices $\{1,j'\},\dots,\{i-2,j'\}$ ,$\{1,i-1\},\dots,\{i-2,i-1\}$, $\{i-1,i\},\dots,\{i-1,j'-1\}$  are the neighbors of $w$ that are not in $S$; and
the vertices $\{i-1,j'+1\},\dots,\{i-1,n\}$, $\{i,j'\},\dots \{j'-1,j'\}$, $\{j',j'+1\},\dots,\{j',n\}$ are the neighbors of $w$ that are in $S$. See Figure~\ref{fig:hor}. Therefore,
\begin{align*}
 |\partial S'|& =|\partial S|+(i-2)+(i-2)+(j'-i)-(n-j')-(j'-i)-(n-j') \\
 & =|\partial S|+2(i+j'-n -2).
\end{align*}

$S$ is formed by: the vertices in row $i-1$ to the right of $w$; a rectangle of sides equal to $j-i$ and $(n-j+1)$; and 
a triangle of base $(n-j)$. See Figure~\ref{fig:hor}. Therefore, 
\[|S|=(n-j')+(j-i)(n-j+1)+\frac{(n-j+1)(n-j)}{2}.\]

By Lemma~\ref{iso_s} we have that
\begin{align*}
|\partial S|& =2(n-1)|S|-\sum_{l=i-1}^n \left |S_{\{l\}} \right |^2 \\
&=2(n-1)|S|-(n-j')^2-\sum_{l=i}^{j-1}(n-j+1)^2-\sum_{l=j}^{j'}(n-i)^2 \\ & -\sum_{l=j'+1}^{n}(n-i+1)^2 \\
&=2(n-1)|S|-(n-j')^2-(j-i)(n-j+1)^2-(j'-j+1)(n-i)^2\\ & -(n-j')(n-i+1)^2
\end{align*}
Note that $|S'|=|S|+1$. 

Putting everything together we have that
\begin{align*}
 &\frac{|\partial S'|}{|S'|} \le\frac{|\partial S|}{|S|} \\
 & \iff \\
 & \frac{|\partial S|+2(i+j'-n -2)}{|S|+1} \le\frac{|\partial S|}{|S|}\\
 & \iff \\
 &2|S|(i+j'-n-2) \le |\partial S| \\
 &\iff \\
 & 2\left ((n-j')+(j-i)(n-j+1)+\frac{(n-j+1)(n-j)}{2} \right )(i+j'-n-2) \\ & \le 2(n-1)|S|-(n-j')^2-(j-i)(n-j+1)^2-(j'-j+1)(n-i)^2\\ & -(n-j')(n-i+1)^2 \\
 & \iff \\
 & 0 \le n^3-j'n^2+3n^2+ni^2+2j'n i-3j'n-3n i+2n-ji^2+i^2-2j'j i-2in^2 \\ &+2i j^2
 -ji+2j'i-i+j'j^2-j'j-j^3+j'^2+j^2-j'
\end{align*}
The last term can be rewritten as
\begin{align*}
  & (n-j)^2+(n-j)(j-i)^2+2(n-j)(j-i)(n-j')
 +(n-j)(j-i) \\ & +(n-j)(n-j')+(n-j)+(j-i)^2+2(j-i)(n-j')
 +(j-i)+(n-j')^2 \\ & +(n-j')(n-j)^2+(n-j');
\end{align*}
each of the terms in this sum is greater or equal to zero. The result follows.
\end{proof}

\begin{figure}
 	\centering
 	\includegraphics[width=0.4\textwidth]{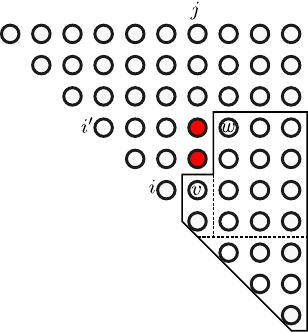}
 	\caption{Depiction of $S$ and $Q$ in Lemma~\ref{vertical}; $S$ is enclosed with a polygon and $Q$
 	is painted red.}
 	\label{fig:ver}
 \end{figure}

For certain types of stable sets $ S $ adding a certain vertex $ w $ does not guarantee that a new obtained set $ S' $ satisfies that $ |\partial S'|/| S'|\leq |\partial S|/| S| $, but rather a certain set of vertices that are on the same column must be added.

\begin{lemma}\label{vertical}
Let  $v=\{i,j\}$ and  $w=\{i',j+1\}$ be two vertices of $J(n,2)$, with $i' < i$. Let $S$ be the set of vertices determined by
$\{v,w\}$. Let $Q$ be the set of vertices in column $j$, and between rows $i'$ and $i$; that is 
$Q=\{\{i',j\},\{i'+1,j\},\dots \{i-1,j\}\}$
 Then $ S':=S\cup Q$ is a stable set, and
\[\frac{|\partial S'|}{|S'|}\leq \frac{|\partial S|}{|S|}. \]
\end{lemma}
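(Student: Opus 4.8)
The plan is to follow the pattern of the proof of Lemma~\ref{horizontal}: express $|S|,|S'|$ and $|\partial S|$ explicitly in terms of $n,i,i',j$, reduce the claimed inequality to a polynomial inequality, and exhibit that polynomial as a sum of nonnegative terms.

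First I would dispose of stability by observing that $S'$ is precisely the stable set determined by the single vertex $\{i',j\}$. Indeed, $S$ is the union of the vertices with row $\ge i$ and column $\ge j$ (dominated by $v$) and the vertices with row $\ge i'$ and column $\ge j+1$ (dominated by $w$), while $Q$ consists of $\{i',j\},\dots,\{i-1,j\}$. A vertex dominated by $\{i',j\}$ has column $\ge j$: if its column is $\ge j+1$ it lies in $S$ through $w$; if its column equals $j$ then its row is either $\ge i$, so it lies in $S$ through $v$, or in $\{i',\dots,i-1\}$, so it lies in $Q$. Hence $S'=\{u : \{i',j\}\text{ dominates }u\}$, which is stable.

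Next I would record cardinalities. Writing $d:=i-i'$ and splitting $S$ into the $d$ rows $i',\dots,i-1$ (each contributing its $n-j$ rightmost vertices), the rectangle of rows $i,\dots,j-1$ (each contributing $n-j+1$ vertices) and the triangle of rows $j,\dots,n-1$, I get $|S|=d(n-j)+(j-i)(n-j+1)+\binom{n-j+1}{2}$ and $|S'|=|S|+d$. Lemma~\ref{iso_s} with $k=2$ gives $|\partial T|=2(n-1)|T|-\sum_{l=1}^{n}|T_{\{l\}}|^2$ for any $T\subseteq V(J(n,2))$, and only the terms with $l\in\{i',\dots,i-1\}$ or $l=j$ differ between $S$ and $S'$: for $i'\le l\le i-1$ one has $|S_{\{l\}}|=n-j$ and $|S'_{\{l\}}|=n-j+1$, and $|S_{\{j\}}|=n-i$ while $|S'_{\{j\}}|=n-i'$. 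Therefore
\[|\partial S'|-|\partial S|=2(n-1)d-\Big(d\big(2(n-j)+1\big)+(n-i')^2-(n-i)^2\Big)=d\,(2i+2j-2n-3-d).\]

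Since $|S'|=|S|+d$ with $d\ge1$, the inequality $|\partial S'|/|S'|\le|\partial S|/|S|$ is equivalent to $(2i+2j-2n-3-d)\,|S|\le|\partial S|$; substituting $|\partial S|=2(n-1)|S|-\sum_l|S_{\{l\}}|^2$, together with $\sum_l|S_{\{l\}}|^2=d(n-j)^2+(j-i)(n-j+1)^2+(n-i)^2+(n-j)(n-i+d)^2$ (using $|S_{\{l\}}|=0$ for $l<i'$ and $|S_{\{l\}}|=n-i+d$ for $l>j$), this becomes $\sum_l|S_{\{l\}}|^2\le(4n+1-2i-2j+d)\,|S|$. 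Finally I would put $a:=n-j\ge1$, $b:=j-i\ge1$, $c:=d\ge1$; then $n$ disappears from the inequality, and after multiplying by $2$ to clear the binomial coefficient and expanding, the inequality reduces to
\[2a^3+3a^2+a+4a^2b+4ab+3a^2c+3ac+2ab^2+2b^2+2abc+2bc\ \ge\ 0,\]
which holds because $a,b,c\ge0$ and all coefficients are nonnegative. The only delicate part of the argument is the bookkeeping needed to pin down the five ranges of $l$ and the corresponding values of $|S_{\{l\}}|$ and $|S'_{\{l\}}|$; everything afterwards is routine algebra.
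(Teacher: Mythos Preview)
Your proof is correct and follows essentially the same approach as the paper: compute $|S|$, $|S'|$, $|\partial S|$, $|\partial S'|$ via Lemma~\ref{iso_s}, reduce the ratio inequality to a polynomial inequality, and exhibit that polynomial as a sum of nonnegative terms. Your route is slightly more streamlined than the paper's---you work with the increment $|\partial S'|-|\partial S|=d(2i+2j-2n-3-d)$ rather than expanding both boundaries in full, and your substitution $a=n-j$, $b=j-i$, $c=i-i'$ yields a cleaner final polynomial---but the underlying argument is the same.
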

\begin{proof}
 Note that $S$ consists of: a triangle of base equal to $n-j$; a rectangle of base equal to $n-j$ and height equal to $j-i'$;
 and a rectangle of base equal to one and height equal to $j-i$. See Figure~\ref{fig:ver}.
 Therefore,
 \[|S|=\frac{(n-j+1)(n-j)}{2}+(n-j)(j-i')+j-i.\]
 We also have that 
 \[|S'|=|S|+(i-i')=\frac{(n-j+1)(n-j)}{2}+(n-j+1)(j-i').\]
 
 By Lemma~\ref{iso_s}, we have that
 \begin{align*}
  |\partial S|& =2(n-1)|S|-\sum_{l=i'}^n \left |S_{\{l\}} \right |^2\\
  & =2(n-1)|S|-\sum_{l=i'}^{i-1} \left |S_{\{l\}} \right |^2-\sum_{l=i}^{j-1} \left |S_{\{l\}} \right |^2-\left |S_{\{j\}} \right |^2-\sum_{l=j+1}^n \left |S_{\{l\}} \right |^2\\
  &=2(n-1)|S|-\sum_{l=i'}^{i-1} (n-j)^2-\sum_{l=i}^{j-1} (n-j+1)^2-(n-i)^2-\sum_{l=j+1}^n (n-i')^2\\
  &=2(n-1)|S|-(i-i') (n-j)^2-(j-i)(n-j+1)^2-(n-i)^2\\& -(n-j) (n-i')^2
 \end{align*}
 and 
\begin{align*}
  |\partial S'|& =2(n-1)|S'|-\sum_{l=i'}^n \left |S'_{\{l\}} \right |^2\\
  & =2(n-1)|S'|-\sum_{l=i'}^{j-1} \left |S_{\{l\}} \right |^2-\sum_{l=j}^{n} \left |S_{\{l\}} \right |^2\\
  & =2(n-1)|S'|-\sum_{l=i'}^{j-1}(n-j+1)^2-\sum_{l=j}^{n} (n-i')^2\\
  & =2(n-1)|S'|-(j-i')(n-j+1)^2-(n-j+1)(n-i')^2
 \end{align*}
Then 

\begin{align*}
 &\frac{|\partial S'|}{|S'|} \leq\frac{|\partial S|}{|S|} \\
 & \iff \\
 & 2(n-1)-\frac{(j-i')(n-j+1)^2+(n-j+1)(n-i')^2}{\frac{(n-j+1)(n-j)}{2}+(n-j+1)(j-i')}\leq \frac{|\partial S|}{|S|}\\
& \iff \\
& \frac{(j-i')(n-j+1)+(n-i')^2}{\frac{(n-j)}{2}+(j-i')}\\
& \geq  \frac{(i-i') (n-j)^2+(j-i)(n-j+1)^2+(n-i)^2 +(n-j) (n-i')^2}{\frac{(n-j+1)(n-j)}{2}+(n-j)(j-i')+j-i} \\
& \iff \\
&-\frac{ji'^2}{2}-\frac{ji^2}{2}-\frac{ni^2}{2}+i'ji-i'in+i'i^{2}+in^2-i'n^2-ii'^2\\
&+\frac{3ni'^2}{2}+\frac{ji'}{2}+\frac{in}{2}-\frac{i'n}{2}-\frac{ji}{2}\geq 0
\end{align*}
The last term can be rewritten as
\begin{align*}
  & (n-j)^2(i-i')+\frac{(n-j)(i-i')(2j-i-i')}{2}+(n-j)(j-i')(i-i')\\
  &+\frac{(n-j)(i-i')}{2}+(j-i)(j-i')(i-i')
\end{align*}
each of the terms in this sum is greater or equal to zero. The result follows.
\end{proof}

\begin{lemma}\label{lower_bound}
Let $v$ be the last vertex of $ F_{n^\ast} $; let $F'$ be  the set obtained from $ F_{n^\ast} $ by adding all the
vertices to right of $v$ and  in the same row as $v$. Then 
\begin{equation}
\textup{iso } J(n,2)\geq \min \{|\partial L_{n^\ast}|/ | L_{n^\ast}|, |\partial F'|/ | F'|\}
\end{equation}
\end{lemma}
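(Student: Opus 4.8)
The goal is to reduce the infimum over all candidate isoperimetric sets $F_1,\dots,F_{n^\ast},L_1,\dots,L_{n^\ast}$ to just two explicit sets, $L_{n^\ast}$ and $F'$. By Theorem~\ref{kat} and the discussion in the Preliminaries, an isoperimetric set of $J(n,2)$ is one of these $2n^\ast$ sets, so it suffices to show that for every such set $S$ there is one of $L_{n^\ast}$ or $F'$ whose ratio $|\partial\cdot|/|\cdot|$ is no larger. Note that $L_m$ is always a stable set (it is the set of the last $m$ vertices, hence closed under domination in the appropriate sense), and among the $L_m$ the ratio is minimized somewhere; the plan is to argue that it is minimized at $m=n^\ast$, i.e. at the largest admissible size. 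For the $F_m$ side, each $F_m$ is an "anti-stable" set, but its complement is stable, and since $\partial S=\partial S^c$ the ratio $|\partial F_m|/|F_m|$ is what we must control directly; here $F'$ is obtained from $F_{n^\ast}$ by completing the last partial row, so $F'$ is a stable set (a "staircase" region: full rows plus possibly one full row just added), and $F_{n^\ast}\subseteq F'$ with $|F'|$ only slightly larger than $n^\ast$.

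The engine is Lemmas~\ref{horizontal} and \ref{vertical}: each one takes a stable set of a special shape and produces a larger stable set with ratio no larger. The plan is to start from any candidate $F_m$ (resp. its relevant stable reshaping) and repeatedly apply these two moves to "push" it monotonically toward $F'$, and to start from any $L_m$ and argue monotonicity toward $L_{n^\ast}$. Concretely: for a set of the form "some full rows, then a partial row," Lemma~\ref{horizontal} lets us add the vertices of the partial row one at a time (each $w=\{i-1,j'\}$ sitting to the right of the staircase corner $v=\{i,j\}$, with $j'\ge j$), never increasing the ratio, until the row is complete; when a row is complete we are at an $F'$-type set for the next size class, and Lemma~\ref{vertical} handles the transition across a column when the shape is of the "two-corner" type. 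Iterating, the ratio of $F_m$ is at least the ratio of $F'$ for the largest size $n^\ast$ dictates, giving $|\partial F'|/|F'|$ as a lower bound coming from the $F$-family; the $L$-family analogously yields $|\partial L_{n^\ast}|/|L_{n^\ast}|$. Taking the minimum of the two bounds over the two families gives the claimed inequality.

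The main obstacle is bookkeeping the shapes so that at every step the current set literally matches the hypothesis of Lemma~\ref{horizontal} or Lemma~\ref{vertical} — in particular verifying that the intermediate sets are exactly of the form "vertices to the right of $w$ together with vertices dominated by $v$" (for the horizontal move) or "vertices determined by $\{v,w\}$" (for the vertical move), and checking that the sizes stay within $[1,n^\ast]$ so the sets remain legitimate candidates. A secondary subtlety is the boundary behavior at the extreme sizes: one must confirm that $F'$ itself has size at most $n^\ast$ plus one full row (so that it is still on the "small half," or else that replacing it by its complement, which is an $L$-type set, only helps), and that the reduction from a generic $F_m$ with $m<n^\ast$ genuinely terminates at the $F'$ associated with $n^\ast$ rather than some intermediate staircase. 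Once the shape invariants are pinned down, the inequality is immediate from finitely many applications of Lemmas~\ref{horizontal} and~\ref{vertical}.
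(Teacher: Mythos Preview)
Your high-level strategy is right and your treatment of the $L$-family is essentially the paper's: iterate Lemma~\ref{horizontal} to get $|\partial L_{m+1}|/|L_{m+1}|\le |\partial L_m|/|L_m|$, hence the minimum over the $L_m$ is attained at $m=n^\ast$.

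The gap is on the $F$-side. Under the paper's definition, a stable set is closed under moving \emph{down and to the right}: if $\{i,j\}\in S$ and $i\le i'$, $j\le j'$, then $\{i',j'\}\in S$. The sets $F_m$ and $F'$ consist of the \emph{top} rows of the diagram and are therefore \emph{not} stable; your assertion that ``$F'$ is a stable set'' is false in this sense. Consequently neither Lemma~\ref{horizontal} nor Lemma~\ref{vertical} applies to $F_m$ or $F'$ as written: the hypothesis of Lemma~\ref{horizontal} requires $S$ to be the vertices to the right of $w$ together with the vertices \emph{dominated} by $v$ (a bottom-right region), and Lemma~\ref{vertical} likewise requires $S$ to be determined by $\{v,w\}$ via domination. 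Your phrase ``its relevant stable reshaping'' gestures at a fix but never supplies one, and ``adding vertices of the partial row one at a time'' in an $F_m$ means adding to the \emph{left}, which is not the move in Lemma~\ref{horizontal}.

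The missing idea is a graph automorphism that turns $F$-type sets into stable sets. The paper uses the reflection $f$ across the anti-diagonal of the triangular diagram (the line through $\{1,n\}$ and $\{2,n-1\}$), namely $f(\{i,j\})=\{n+1-j,\,n+1-i\}$. This $f$ is an isomorphism of $J(n,2)$; it sends $F_{n'}$ to a set $R_1$ consisting of some full rightmost columns together with a partial column, which is exactly the shape ``determined by $\{v,w\}$'' in Lemma~\ref{vertical}, and it sends $F'$ to the set $R_2$ of full rightmost columns. One then applies Lemma~\ref{vertical} (iterated) to obtain $|\partial R_2|/|R_2|\le |\partial R_1|/|R_1|$, and since $f$ preserves both $|\cdot|$ and $|\partial\cdot|$ this yields $|\partial F'|/|F'|\le |\partial F_{n'}|/|F_{n'}|$. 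Note also that the division of labor is the reverse of what you sketched: Lemma~\ref{horizontal} handles the $L$-family directly, while Lemma~\ref{vertical} handles the $F$-family only after this reflection. Finally, your worry about whether $|F'|\le n^\ast$ is unnecessary: $F'$ is not required to be a candidate isoperimetric set, only a benchmark whose ratio lower-bounds every $|\partial F_{n'}|/|F_{n'}|$.
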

\begin{proof}
Let $S$ be a subset of vertices of $ J(n,2) $ of cardinality $n'\le n^\ast$. 
By Theorem \ref{kat}  we have that 
\[ |\partial S| \geq |\partial L_{n'}| \textrm{ or  } |\partial S|\geq |\partial F_{n'}|.\]
By iteratively applying Lemma~\ref{horizontal} , we have that 
\[|\partial L_{i+1}|/|L_{i+1}| \le |\partial L_{i}|/|L_{i}|\]
 for all  $1 \le i \le n^\ast.$  Therefore,
\[|\partial L_{n'}|/|L_{n'}|\geq |\partial L_{n^\ast}|/|L_{n^\ast}|.\]

Let $\ell$ be the straight line in our diagram, passing through the points $\{1,n\}$ and $\{2,n-1\}$. Let $f$ be the map
that reflects every point $\{i,j\}$ over $\ell$. We have that $f$ is an isomorphism of $J(n,2)$. 
Let \[R_1:=f(F_{n'}) \textrm{ and } R_2:=f(F').\]
By Lemma~\ref{vertical}, we have that
\[\frac{|\partial R_2|}{|R_2|} \le \frac{|\partial R_1|}{|R_1|}. \]
Since $f$ is an isomorphism we have that 
\[\frac{|\partial F'|}{|F'|}\le \frac{|\partial F_{n'}|}{|F_{n'}|}. \]
Therefore,
\[\frac{|\partial S |}{|S|} \ge \frac{|\partial L_{n^\ast} |}{| L_{n^\ast}|} \textrm{ or } \frac{|\partial S |}{|S|} \ge \frac{|\partial F' |}{| F'|}. \]
The result follows.
\end{proof}
We are ready to prove Theorem \ref{main}.

\section{Proof of Theorem \ref{main}}
\begin{proof}[Proof of Theorem \ref{main}]

Let $p$ be the smallest row such that number of vertices in the first $p$ rows is greater or equal to
$n^\ast$. Note that for every $1 \le i \le n-1$ the number of elements in the first $i$
rows is equal to $in-{i(i+1)}/{2}.$ Therefore,
\[p=\left \lceil \frac{2n-1-\sqrt{(2n-1)^2-8n^\ast}}{2} \right \rceil.\]
Let $F'$ be as in Lemma~\ref{lower_bound}; by Lemma~\ref{iso_s} we have that 
  \[\frac{|\partial F'|}{|F'|} = 2(n-1)-\dfrac{p(n-p)+(n-1)^2}{n-(p+1)/2}.\]

Let $q$  be the smallest integer such that the number of vertices in  the last $q$ rows is greater or equal
 to $n^\ast$. We have that for every $1 \le i \le n-1$, the number of vertices in the last $i$ rows is equal to 
  $i(i+1)/2$. Therefore,
 \[q= \left \lceil \frac{\sqrt{8n^\ast+1}-1}{2} \right \rceil.\]
Let $v=\{i,j\}$ be such that $L_{n^\ast}$ is the set of vertices greater or equal to $v$.
Note that $i=n-q$ and that row $i$ has exactly $q$ vertices. Thus, the number of vertices greater
or equal to $\{i,j\}$ is equal to $(n-j+1)+q(q-1)/2$. Equating this to $n^\ast$ we have that
\[j=n+\frac{q(q-1)}{2}+1-n^\ast.\]

Note that $L_{n^\ast}$ consists of $n-j+1$ vertices in row $i$ and a triangle of base equal to $q-1$ and height equal to $q-1$.
Therefore, 
\begin{align*}
 |L_{n^\ast}|&=n-j+1+\frac{q(q-1)}{2}\\
 &=n-j+1+\frac{(n-i)(q-1)}{2}\\
 &=\frac{qn+n-2j-qi+i+2}{2}.
 \end{align*}
 
Note that: $ |{L_{n^\ast}}_{\{l\}} | = 0$ for $ l \leq i $; $ |{L_{n^\ast}}_{\{i\}} | = n-j+1$; $ |{L_{n^\ast}}_{\{l\}} | = q-1$ for $i+1 \le l \le j-1$;  and $ |{L_{n^\ast}}_{\{l\}} | = q$ for  $ l \ge j $.
By Lemma~\ref{iso_s} we have that
\begin{align*}
  |\partial L_{n^\ast}|& =2(n-1)|L_{n^\ast}|-\sum_{l=1}^n \left |{L_{n^\ast}}_{\{l\}} \right |^2\\
  & =2(n-1)|L_{n^\ast}|-|{L_{n^\ast}}_{\{i\}}|^2-\sum_{l=i+1}^{j-1} \left |{L_{n^\ast}}_{\{l\}} \right |^2-\sum_{l=j}^{n} \left |{L_{n^\ast}}_{\{l\}} \right |^2\\
& =2(n-1)|L_{n^\ast}|-(n-j+1)^{2}-(q-1)^{2}(j-i-1)-q^{2}(n-j+1)\\
 \end{align*}
Substituting the values of $i$ and $j$ obtained above, we have that
\begin{equation*}
\begin{split}
\frac{|\partial L_{n^\ast}|}{|L_{n^\ast}|}  &= 2(n-1)-\frac{(n-j+1)^{2}+(q-1)^{2}(j-i-1)+q^{2}(n-j+1)}{|L_{n^\ast}|}\\
&= \dfrac{-q^4+2q^3+q^2-4{n^{\ast}}^{2}-2q+4n^{\ast}q^2-12n^{\ast}q+8n^{\ast}n-4n^{\ast}}{4n^{\ast}}\\
\end{split}
\end{equation*}

Let \[ g_{1}(n) := (2-\sqrt{2})n,\    g_{2}:=4n^{\ast}\cdot g_{1}(n) \textrm{ and }  h(n):=n-(p+1)/2. \]
We have the following limits.
\[\lim_{n \to \infty} \frac{2(n-1)}{g_{1}(n)}=2+\sqrt{2}; \lim_{n \to \infty} \frac{p(n-p)}{g_{1}(n)h(n)} = \sqrt{2}- 1;  \]
\[ \lim_{n \to \infty}  \frac{(n-1)^2}{g_{1}(n)h(n)}=2; \lim_{n \to \infty} \frac{4n^{\ast}}{g_{2}(n)}=0,
\lim_{n \to \infty} \frac{q^2-2q}{g_{2}(n)}=0; \] 
\[\lim_{n \to \infty}\frac{-q^4-4(n^{\ast})^2+4n^{\ast}q^2}{g_{2}(n)}=0; 
\lim_{n \to \infty}\frac{8n^{\ast}n}{g_{2}(n)}=2+\sqrt{2};\]
\[\lim_{n \to \infty}\frac{-12n^{\ast}q}{g_{2}(n)}=\frac{-3(1+\sqrt{2})}{2}; \textrm{ and }
\lim_{n \to \infty} \frac{2q^3}{g_{2}(n)}=\frac{1}{2\sqrt{2}-2}.\]
It follows that \begin{equation*}
\lim_{n \to \infty}\frac{|\partial F'|}{\left (2-\sqrt{2}\right )n|F'|}=\lim_{n \to \infty}\frac{|\partial L_{n^{\ast}}|}{\left (2-\sqrt{2} \right )n|L_{n^{\ast}}|}=1.
\end{equation*} The theorem follows from Lemma \ref{lower_bound} and the fact that 
$  \operatorname{iso}(J(n,2)) \le |\partial L_{n^{\ast}}|/|L_{n^{\ast}}|.$ 
\end{proof}

To finish the paper we make the following remark.
We have verified by computer that  
\[\left | \frac{|\partial L_{n^\ast}|}{|L_{n^\ast}|}-\frac{|\partial F'|}{|F'|} \right | \le \frac{3}{2},\] 
for all $ n\leq 10^{9} $. We conjecture that this inequality is valid for all natural numbers $ n\geq 3 $.


\bibliographystyle{abbrv}
\bibliography{iso}




\end{document}